\theoremstyle{plain}
\newtheorem{theorem}{Theorem}
\newtheorem{lemma}[theorem]{Lemma}
\theoremstyle{definition}
\newtheorem{definition}[theorem]{Definition}
\theoremstyle{remark}
\newtheorem{remark}[theorem]{Remark}
\title{Eulerian and bipartite binary delta-matroids}
\author{Qi Yan~~~Xian'an Jin\footnote{Corresponding author.}\\
\small School of Mathematical Sciences\\[-0.8ex]
\small Xiamen University\\[-0.8ex]
\small P. R. China\\
\small\tt Email:qiyanmath@163.com;xajin@xmu.edu.cn}
\date{}
\begin{document}

\begin{abstract}
Delta-matroid theory is often thought of as a generalization of topological graph theory. It is well-known that an orientable embedded graph is bipartite if and only if its Petrie dual is orientable. In this paper, we first introduce the concepts of Eulerian and bipartite delta-matroids and then extend the result from embedded graphs to arbitrary binary delta-matroids. The dual of any bipartite embedded graph is Eulerian. We also extend the result from embedded graphs to the class of delta-matroids that arise as twists of binary matroids. Several related results are also obtained.
\end{abstract}
\begin{keyword}
matroid\sep delta-matroid\sep binary\sep bipartite\sep Eulerian
\vskip0.2cm
\MSC 05B35\sep 05C10\sep 57M15
\end{keyword}
\maketitle

\section{Introduction}
Matroid theory is often thought of as a generalization of graph theory. Many results in graph theory turn out to be special cases of results in matroid theory.
A graph is said to be \emph{Eulerian} if the degree of each of its vertices is even, and a graph is \emph{bipartite} if it does not contain cycles of odd lengths. Welsh \cite{DW} introduced the concepts of Eulerian and bipartite on matroids and showed how Euler's well-known graph theorem has an appealing matroid generalization. They showed that for binary matroids the properties of being Euler and bipartite are dual concepts, thus generalizing Euler's theorem for graphs. Wilde \cite{PJW} showed that if $M$ is a binary matroid, then every cocircuit of $M$ has even cardinality if and only if $M$ can be obtained by contracting some other binary matroid $M^{+}$ onto a single circuit. This is the natural analog of the Euler circuit theorem for graphs. More recently, Shikare and Raghunathan \cite{SR} have shown that a binary matroid $M$ is Eulerian if and only if the number of independent sets of $M$ is odd. Shikare \cite{MMS} also gave some new characterizations of Eulerian and bipartite binary matroids.

However, our interest here is in a generalisation of matroids called delta-matroids. Delta-matroids were introduced by Bouchet in \cite{AB1}. Chun, Moffatt, Noble and Rueckriemen \cite{CISR, CMNR} gave an analogous correspondence between embedded graphs and delta-matroids. Many fundamental definitions and results in topological graph theory and delta-matroid theory are compatible with each other. A significant consequence of this connection is that the geometric ideas of topological graph theory provide insight and intuition into the structure of delta-matroids, thus pushing forward the development of both areas.

The key purpose of this paper is to propose and study a similar correspondence between  bipartite (Eulerian) embedded graphs and delta-matroids.
We know an embedded graph $G$ is bipartite (Eulerian) if and only if the underlying graph of $G$ is bipartite (Eulerian). Hence, we call that a delta-matroid $D=(E, \mathcal{F})$ is  bipartite (Eulerian), if the lower matroid of $D$ is  bipartite (Eulerian). When applied to matroids, this definition of bipartite (Eulerian) delta-matroids agrees with the usual definition of bipartite (Eulerian) matroids.
Let $M=(E, \mathcal{F})$ be a binary matroid. Brijder and Hoogeboom \cite{RH2} showed that $M$ is bipartite if and only if  the loop complementation of $M$ on $E$ is an even delta-matroid. The result is interesting in the context of embedded graphs. In \cite{EM1} it was shown that an orientable embedded graph $G$ is bipartite if and only if its Petrie dual is orientable. We extend the nomenclature from embedded graphs to arbitrary binary delta-matroids.  It is well-known fact that for a plane graph $G$, $G$ is bipartite if and only if the dual of $G$ is Eulerian. This fact only holds for general embedded graphs in one direction: the dual of any bipartite embedded graph is Eulerian. And it is not true in other direction in general. We also extend the result from embedded graphs to the class of delta-matroids that arise as twists of binary matroids.

\section{Preliminaries}

We begin by defining matroids, delta-matroids and associated  terminologies. All definitions follow \cite{AB3, CISR, CMNR, JO}.
Throughout this paper, we will often omit the set brackets in the case of a single element set. For example, we write $F\cup e$ instead of $F\cup \{e\}$,
or $E-e$ instead of $E-\{e\}$.

\subsection{Matroids}

A \emph{matroid} $M$ is an ordered pair $(E, \mathcal{I})$ consisting of a finite set $E$ and a collection $\mathcal{I}$ of subsets of $E$ having the following three properties:

(I1.) $\emptyset \in \mathcal{I}$.

(I2.) If $I_{1}\in \mathcal{I}$, and $I_{2}\subseteq I_{1}$, then $I_{2}\in \mathcal{I}$.

(I3.) If $I_{1}$ and $I_{2}$ are in $\mathcal{I}$, and $|I_{2}|<|I_{1}|$, then there is an element $e\in I_{1}-I_{2}$ such that $I_{2}\cup e\in \mathcal{I}$.

If $M$ is the matroid $(E, \mathcal{I})$, then $M$ is called a matroid on $E$. The members of $\mathcal{I}$ are the \emph{independent sets} of $M$, and $E$ is the \emph{ground set} of $M$. We shall often write $\mathcal{I}(M)$ for $\mathcal{I}$ and $E(M)$ for $E$, particularly when several matroids are being considered.

 A maximal independent set of $M$ is a \emph{base} of $M$. A subset of $E$ not belonging to $\mathcal{I}$ is said to be \emph{dependent}. A minimal dependent subset of $E$ is called a \emph{circuit} of $M$. The set of all bases and circuits of $M$ will be denoted by $\mathcal{B}(M)$ and $\mathcal{C}(M)$, respectively. We say that the \emph{rank} of $M$, written $r(M)$, is equal to $|B|$ for any $B\in\mathcal{B}(M)$.

 The matroid $M^{*}$, whose ground set is $E(M)$ and whose set of bases is $\mathcal{B}(M^{*}):=\{E-B: B\in \mathcal{B}\}$, is called the \emph{dual} of $M$. A circuit of $M^{*}$ is called a \emph{cocircuit} of $M$.

\begin{definition}\cite{DW}
 A matroid $M=(E, \mathcal{I})$ is said to be \emph{Eulerian} if there are disjoint circuits $C_{1}, \cdots, C_{p}$ in $M$ such that $E(M)=C_{1}\cup \cdots \cup C_{p}$. A matroid is said to be \emph{bipartite} if its every circuit has even cardinality.
\end{definition}

 \subsection{Set systems and delta-matroids}

A \emph{set system} is a pair $D=(E, \mathcal{F})$, where $E$ is a finite set, which we call the \emph{ground set}, and $\mathcal{F}$ is a collection of  subsets of $E$, called \emph{feasible sets}. We define $E(D)$ to be $E$ and $\mathcal{F}(D)$ to be $\mathcal{F}$. A set system $(E, \mathcal{F})$ is \emph{proper} if
$\mathcal{F}$ is not empty. It is \emph{trivial} if $E$ is empty.

For sets $X$ and $Y$, their \emph{symmetric difference}is denoted by $X\Delta Y$ and is defined to be $(X\cup Y)-(X\cap Y)$.

A \emph{delta-matroid} is a proper set system $D=(E, \mathcal{F})$ that satisfies the symmetric exchange axiom:

{\bf Axiom} (Symmetric exchange axiom). For all $(X, Y, u)$ with $X, Y \in \mathcal{F}$ and $u\in X\Delta Y$, there is an element $v\in X\Delta Y$ such that
$X\Delta \{u, v\}$ is in $\mathcal{F}$.

Note that we allow $u=v$ in symmetric exchange axiom. These structures were first studied by Bouchet in \cite{AB1}.
If all of the feasible sets of a delta-matroid are equicardinal, then the delta-matroid is a matroid and we refer to its feasible sets as its bases.

For a delta-matroid $D=(E, \mathcal{F})$, let $\mathcal{F}_{max}(D)$ and $\mathcal{F}_{min}(D)$ be the set of feasible sets with maximum and minimum cardinality, respectively. We will usually omit $D$ when the context is clear. Let $D_{max}=(E, \mathcal{F}_{max})$ and let $D_{min}=(E, \mathcal{F}_{min})$.
Then $D_{max}$ is the \emph{upper matroid} and $D_{min}$ is the \emph{lower matroid} for $D$. These were defined by Bouchet in \cite{AB2}. It is straightforward to show that the upper matroid and the lower matroid are indeed matroids.
\begin{definition}
A delta-matroid $D=(E, \mathcal{F})$ is  bipartite (Eulerian), if $D_{min}$ is  bipartite (Eulerian).
\end{definition}

For delta-matroids (or matroids) $D_{1}=(E_{1}, \mathcal{F}_{1})$ and $D_{2}=(E_{2}, \mathcal{F}_{2})$, where $E_{1}$ is disjoint from $E_{2}$, the \emph{direct sum} of $D_{1}$ and $D_{2}$, written $D_{1}\oplus D_{2}$, is constructed by
$$D_{1}\oplus D_{2}:=(E_{1}\cup E_{2}, \{F_{1}\cup F_{2}: F_{1}\in \mathcal{F}_{1}, F_{2}\in \mathcal{F}_{2}\}).$$
In particular, if $D_{1}$ and $D_{2}$ are matroids, then $$\mathcal{C}(D_{1}\oplus D_{2})=\mathcal{C}(D_{1})\cup \mathcal{C}(D_{2}).$$
\subsection{Minors}
For a proper set system $D=(E, \mathcal{F})$, and $e\in E$, if $e$ is in every feasible set of $D$, then we say that $e$ is a \emph{coloop} of $D$. If $e$ is in no feasible set of $D$, then we say that $e$ is a \emph{loop} of $D$.

If $e$ is not a coloop, then we define $D$ \emph{delete} $e$, written $D\backslash e$, to be
$$(E-e, \{F: F\in \mathcal{F}, F\subseteq E-e\}).$$ If $e$ is not a loop, then we define $D$ \emph{contract} $e$, written $D/e$, to be $$(E-e, \{F-e: F\in \mathcal{F}, e\in F\}).$$ If $e$ is loop or a coloop, then one of $D\backslash e$ and $D/e$ has already been defined, so we can set $D/e=D\backslash e$.

If $D$ is a delta-matroid then both $D\backslash e$ and $D/e$ are delta-matroids (see \cite{AB3}). Let $D'$ be a delta-matroid obtained from $D$ by a sequence of deletions and contractions. Then $D'$ is independent of the order of the deletions and contractions used in its construction (see \cite{AB3}) and $D'$ is called a \emph{minor} of $D$. If $D'$ is a minor of $D$ formed by deleting the elements of $X$ and contracting the elements of $Y$ then we write $D'=D\backslash X/Y$. If the sizes of the feasible sets of a delta-matroid all have the same parity, then we say that the delta-matroid is \emph{even}. Otherwise, we say that the delta-matroid is \emph{odd}. Note that if $D$ is even, then so are its minors. The \emph{restriction} of $D$ to a subset $A$ of $E$, written $D|_{A}$, is equal to $D\setminus (E-A)$.

\subsection{Twists and loop complementations of set systems}

Twists are one of the fundamental operations of delta-matroid theory. Let $D=(E, \mathcal{F})$ be a set system. For $A\subseteq E$, the \emph{twist} of $D$ with respect to $A$, denoted by $D*A$, is given by $(E, \{A\Delta X: X\in \mathcal{F}\})$. The \emph{dual} of $D$, written $D^{*}$, is equal to $D*E$. It follows easily from the identity $(F'_{1}\Delta A)\Delta (F'_{2}\Delta A)=F'_{1}\Delta F'_{2}$ that the twist of a delta-matroid is also a delta-matroid, as Bouchet showed in
\cite{AB1}. However, if $D$ is a matroid, then $D*A$ need not be a matroid. Note that if $e\in E$, then $D/e=(D*e)\backslash e$ and $D\backslash e=(D*e)/e$. Moreover, if $X\subseteq E$, then $D\setminus X=(D^{*}/X)^{*}$ \cite{CMNR}.

Following Brijder and Hoogeboom \cite{RH}, let $D=(E, \mathcal{F})$ be a set system and $e\in E$. Then $D+e$ is defined to be the set system $(E, \mathcal{F}')$, where $$\mathcal{F}':=\mathcal{F}\Delta \{F\cup e: F\in \mathcal{F}, e\notin F\}.$$ If $e_{1}, e_{2}\in E$ then $(D+e_{1})+e_{2}=(D+e_{2})+e_{1}$. This means that if $A=\{e_{1}, \cdots, e_{n}\}\subseteq E$ we can unambiguously define the \emph{loop complementation} of $D$ on $A$, by $D+A:=D+e_{1}+\cdots +e_{n}$.
Let $D=(E, \mathcal{F})$ be a set system and $X, Y \subseteq E$. Brijder and Hoogeboom  \cite{RH} showed that $Y\in \mathcal{F}(D+X)$ if and only if $|\{Z \in \mathcal{F} : Y\backslash X\subseteq Z \subseteq Y\}|$ is odd.

\subsection{Representability}

Let $A=(A_{vw}: v, w\in E)$ be a square matrix with coefficients in a field $Q$. We say that $A$ is \emph{antisymmetric} if $A_{vw}=-A_{wv}$ for every $v, w\in E$ and $A_{vv}=0$ for every $v\in E$. We say that $A$ is \emph{quasisymmetric} if there exists a function $\varepsilon: E\rightarrow \{-1, +1\}$ such that $\varepsilon(v)A_{vw}=\varepsilon(w)A_{wv}$ holds for every $v, w\in E$. In particular, if $\varepsilon$ is constant function, $A$ is a symmetric matrix. If $A$ is either an antisymmetric or quasisymmetric matrix, it is called a matrix of symmetric type. For every $X\subseteq E$ we let $A[X]=(A_{vw}: v, w\in X)$. By convention we consider $A[\emptyset]$ as a nonsingular matrix. Let $D(A)=(E, \{X: X\subseteq E, A[X]~\mbox{is nonsingular}\})$. Bouchet showed in \cite{AB4} that $D(A)$ is indeed a delta-matroid. A strong representation of the delta-matroid $D$ is a matrix $A$ of symmetric type over a field $Q$ such that $D=D(A)$. A necessary condition for $D$ to have a strong representation is that $\emptyset$ is feasible, and then we say that $D$ is a \emph{normal} delta-matroid. Bouchet showed in \cite{AB3} that if two normal delta-matroids are twist and one of them has a strong representation $A$ over a field $Q$, then the other one has a strong representation $A'$ over $Q$. Moreover $A$ and $A'$ are either both antisymmetric or both quasisymmetric. The delta-matroid $D$ is said to be representable over the field $Q$ if there exists a twist normal delta-matroid $D'$ which has a strong representation over $Q$ . If $A$ is an antisymmetric matrix, then any feasible set of $D(A)$ has an even cardinality (recall that a nonsingular antisymmetric matrix has an even order). A delta-matroid is said to be \emph{binary} if it has a twist that is isomorphic to $D(A)$ for some symmetric matrix $A$ over $GF(2)$. Binary delta-matroids from an important class of delta-matroids. In \cite{AB3} it was shown that if $D$ is a binary delta-matroid, then both of $D_{min}$ and $D_{max}$ are binary matroids.
Moreover, the minor of a binary delta-matroid is also a binary delta-matroid.

As it is much more convenient for our purposes, we realize cellularly embedded graphs as ribbon graphs. We give a brief review of ribbon graphs referring the reader to \cite{EM1,EM} for further details.

\begin{definition}[\cite{EM}]
A \emph{ribbon graph} $G=(V(G), E(G))$ is a (possibly non-orientable) surface with boundary
represented as the union of two sets of discs, a set $V(G)$ of vertices, and a set $E(G)$ of edges
such that
\begin{enumerate}
\item The vertices and edges intersect in disjoint line segments;
\item Each such line segment lies on the boundary of precisely one vertex and precisely one edge;
\item Every edge contains exactly two such line segments.
\end{enumerate}
\end{definition}

An edge $e$ of a ribbon graph is a \emph{loop} if it is incident with exactly one vertex. A loop (respectively, cycle) is \emph{non-orientable} if together with its incident vertex (vertices) it forms a M\"{o}bius band, and is \emph{orientable} otherwise.
A ribbon graph is \emph{non-orientable} if it contains a non-orientable loop or cycle, and is \emph{orientable} otherwise.

\section{Main results}
Let $M=(E, \mathcal{F})$ be a binary matroid. Brijder and Hoogeboom \cite{RH2} showed that $M$ is bipartite if and only if $M+E$ is an even delta-matroid.
The result is interesting in the context of ribbon graphs. In \cite{EM1} it was shown that an orientable ribbon graph $G$ is bipartite if and only if its Petrie dual is orientable. Chun et al. \cite{CISR,CMNR} showed that partial duals and twists as well as partial Petrials and
loop complementations are compatible. So it should come as no surprise that twisted duality (see \cite{EM1})
for ribbon graphs and for delta-matroids are compatible as well.
Firstly, we give some basic lemmas and extend the nomenclature from ribbon graphs to arbitrary binary delta-matroids.

\begin{lemma}[\cite{CMNR}]\label{lea}
Let $D=(E, \mathcal{F})$ be a delta-matroid, let $A\subseteq E$ and let $s_{0}=\min \{|B\cap A|:B\in \mathcal{B}(D_{min})\}$. Then for any $F\in \mathcal{F}$ we have $|F\cap A|\geq s_{0}$.
\end{lemma}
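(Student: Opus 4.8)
The plan is to recast the statement as an extremal inequality and settle it with a well-chosen minimal counterexample driven by the symmetric exchange axiom. Since $\mathcal{B}(D_{min})=\mathcal{F}_{min}\subseteq\mathcal{F}$, the bound $\min_{F\in\mathcal{F}}|F\cap A|\le s_0$ is automatic, so the whole content is the reverse inequality; writing $t:=\min_{F\in\mathcal{F}}|F\cap A|$, I must show $t\ge s_0$. Suppose instead that $t<s_0$. I fix once and for all a set $G\in\mathcal{F}_{min}$ with $|G\cap A|=s_0$ (one exists by the definition of $s_0$), and among all feasible sets $F$ with $|F\cap A|=t$ I choose one for which $|F\Delta G|$ is as small as possible. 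Because $|F\cap A|=t<s_0=|G\cap A|$ and the common term $|(F\cap G)\cap A|$ cancels, I get $|(F\setminus G)\cap A|<|(G\setminus F)\cap A|$; abbreviating these as $a<b$, in particular $b\ge 1$.

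The argument then splits according to whether $a\ge 1$ or $a=0$. If $a\ge 1$, I apply the symmetric exchange axiom to $(F,G,u)$ with $u\in(F\setminus G)\cap A$, obtaining $v\in F\Delta G$ with $F\Delta\{u,v\}\in\mathcal{F}$. Running through the possibilities for $v$ (namely $v=u$, $v\in F\setminus G$, or $v\in G\setminus F$, further split by whether $v\in A$), every outcome either produces a feasible set whose intersection with $A$ drops below $t$, which is impossible since $t$ is the global minimum, or, in the single balanced case $v\in(G\setminus F)\cap A$, produces the feasible set $(F-u)\cup v$ with the same intersection $t$ but with $|{\cdot}\Delta G|$ smaller by $2$, contradicting the minimality of $|F\Delta G|$. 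Thus the case $a\ge 1$ cannot occur.

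The remaining case $a=0$ is the crux, and is exactly where the naive ``shrink $F$'' approach stalls: every element of $F\setminus G$ lies outside $A$, so removing such an element can only be compensated by inserting an element of $G\setminus F$, which may lie in $A$ and push the intersection up to $s_0$ rather than down. I sidestep this by exchanging on $G$ rather than on $F$. Since $b\ge 1$ I pick $w\in(G\setminus F)\cap A$ and apply the symmetric exchange axiom to $(G,F,w)$, getting $v\in G\Delta F$ with $G\Delta\{w,v\}\in\mathcal{F}$. The two outcomes that would shrink $G$ (namely $v=w$, giving $G-w$, and $v\in G\setminus F$, giving $G-\{w,v\}$) are ruled out immediately because $G\in\mathcal{F}_{min}$ has minimum cardinality among feasible sets. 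Hence $v\in F\setminus G$, so $G\Delta\{w,v\}=(G-w)\cup v$ is again a minimum-cardinality feasible set, i.e.\ lies in $\mathcal{F}_{min}$; but $w\in A$ while $v\notin A$ (as $a=0$), so its intersection with $A$ equals $s_0-1$, contradicting the defining minimality of $s_0$ over $\mathcal{F}_{min}$.

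Both cases being impossible, the assumption $t<s_0$ fails, which yields $|F\cap A|\ge s_0$ for every $F\in\mathcal{F}$. The one genuinely delicate point, and the step I expect to be the main obstacle, is precisely the $a=0$ case: the symmetric exchange axiom only guarantees \emph{some} partner $v$, not one I may choose, so the decrease I want is unavailable by operating on $F$, and the key idea is to transfer the exchange to the minimum-cardinality set $G$, where the cardinality constraint defining $\mathcal{F}_{min}$ forces the contradiction. All the case-by-case verifications of how $|{\cdot}\cap A|$ and $|{\cdot}\Delta G|$ change under each exchange are routine bookkeeping that I would carry out in the final write-up.
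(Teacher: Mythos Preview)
The paper does not actually prove this lemma: it is quoted from \cite{CMNR} without argument, so there is no in-paper proof to compare against. Your proof is correct. Every branch of the case analysis is sound; in particular, the crucial move in the $a=0$ case---running the exchange from the minimum-size side $G$ so that the minimality of $|G|$ kills the two ``shrinking'' outcomes, leaving only the outcome that drops $|G'\cap A|$ below $s_0$---is exactly the right idea. One optional streamlining: if at the outset you choose the pair $(F,G)$ with $F\in\mathcal{F}$, $|F\cap A|=t$, $G\in\mathcal{F}_{min}$, $|G\cap A|=s_0$, and $|F\Delta G|$ minimal over \emph{both} choices, then the single exchange on $G$ with $w\in(G\setminus F)\cap A$ already yields a contradiction in every subcase (the subcase $v\in(F\setminus G)\cap A$ now produces $G'\in\mathcal{F}_{min}$ with $|G'\cap A|=s_0$ and $|F\Delta G'|<|F\Delta G|$), so Case~1 becomes unnecessary.
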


\begin{lemma}\label{leb}
Let $D=(E, \mathcal{F})$ be a  set system and $e\in E$. Then $$(D\setminus e)_{min}=D_{min}\setminus e.$$
\end{lemma}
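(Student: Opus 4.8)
The plan is to derive the equality straight from the definitions, organising the argument around whether or not $e$ is a coloop of $D$, since this is exactly what decides which of the two defining formulas (deletion or contraction) governs each side. Throughout I would write $m=\min\{|F|:F\in\mathcal F\}$, so that $\mathcal F(D_{min})=\mathcal F_{min}=\{F\in\mathcal F:|F|=m\}$, and I would use freely that the bases $\mathcal B(D_{min})$ of the lower matroid are precisely the sets in $\mathcal F_{min}$.

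First I would isolate the one auxiliary fact that carries all the content: $e$ is a coloop of $D$ if and only if $e$ is a coloop of $D_{min}$. One direction is immediate, since an element lying in every feasible set in particular lies in every minimum-cardinality one. For the converse I would invoke Lemma~\ref{lea} with $A=\{e\}$. There $s_0=\min\{|B\cap\{e\}|:B\in\mathcal F_{min}\}$ equals $1$ exactly when every minimum feasible set contains $e$, and in that case the lemma forces $|F\cap\{e\}|\ge 1$ for every $F\in\mathcal F$, i.e. $e$ lies in every feasible set of $D$. Restated contrapositively: whenever $e$ is not a coloop of $D$ there is a minimum-cardinality feasible set of $D$ that avoids $e$, so the minimum cardinality among feasible sets not containing $e$ is still $m$.

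With this fact the two cases are routine unfoldings. If $e$ is a coloop of $D$, then it is also a coloop of $D_{min}$, so both sides are computed by the contraction formula: $D\setminus e=D/e=(E-e,\{F-e:F\in\mathcal F\})$, whose minimum-cardinality feasible sets are $\{F-e:F\in\mathcal F_{min}\}$ because deleting $e$ lowers every cardinality by one, while $D_{min}\setminus e=D_{min}/e=(E-e,\{F-e:F\in\mathcal F_{min}\})$; these coincide. If $e$ is not a coloop of $D$, the auxiliary fact shows it is not a coloop of $D_{min}$ either, so both sides are computed by the deletion formula: the feasible sets of $(D\setminus e)_{min}$ are the minimum members of $\{F\in\mathcal F:e\notin F\}$, which by the auxiliary fact are exactly $\{F\in\mathcal F_{min}:e\notin F\}$, and these are precisely the feasible sets of $D_{min}\setminus e$.

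The only real obstacle, and the step I would treat most carefully, is this reconciliation of the coloop status of $e$ in $D$ with its coloop status in $D_{min}$. Without it the minimum cardinality among feasible sets avoiding $e$ could strictly exceed $m$, the contraction formula would intrude on one side but not the other, and the two sides would fail to match. This is exactly the point at which the delta-matroid structure enters, and routing it through Lemma~\ref{lea} (rather than running a bare symmetric-exchange argument on $\mathcal F_{min}$ and $\mathcal F$) keeps the bookkeeping short.
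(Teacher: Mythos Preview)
Your proposal is correct and follows essentially the same route as the paper: both arguments split into the coloop/non-coloop cases and use Lemma~\ref{lea} to show that $e$ is a coloop of $D$ if and only if it is a coloop of $D_{min}$, which is indeed the only nontrivial step. The sole cosmetic difference is that the paper cases on the coloop status of $e$ in $D_{min}$ and then infers the status in $D$, whereas you case on the status in $D$ and infer the status in $D_{min}$; the content is identical.
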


\begin{proof}
If $e$ is not a coloop of  $D_{min}$, then
$$\mathcal{F}((D\setminus e)_{min})=\{F: F\in \mathcal{F}(D_{min}), e\notin F\}=\mathcal{F}(D_{min}\setminus e).$$
Otherwise, $e$ is a coloop of $D_{min}$. Obviously, $e$ is also a coloop of $D$ by Lemma \ref{lea}. Then
$$\mathcal{F}((D\setminus e)_{min})=\{F-e: F\in \mathcal{F}(D_{min})\}=\mathcal{F}(D_{min}\setminus e).$$
\end{proof}

\begin{remark}
$(D/e)_{min}\neq D_{min}/e$. For example, let $$D=(\{1, 2\}, \{\emptyset, \{1, 2\}\}).$$
Then $(D/1)_{min}=(\{2\}, \{\{2\}\})$ but $D_{min}/1=(\{2\}, \{\emptyset\}).$
\end{remark}

In the context of ribbon graphs, a ribbon graph $G$ is non-orientable if and only if there exists a non-orientable cycle (or loop) of $G$. We extend the result from ribbon graphs to delta-matroids as shown in following lemma.

\begin{lemma}\label{lec}
Let $D=(E, \mathcal{F})$ be a binary delta-matroid. Then $D$ is an odd delta-martoid if and only if there exists a circuit $C$ of $D_{min}$ such that $C\in \mathcal{F}(D|_{C})$.
\end{lemma}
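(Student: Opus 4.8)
The plan is to first reduce the statement to a cleaner one about the oddness of a restriction. Observe that the condition $C\in\mathcal F(D|_C)$ is equivalent to saying that $D|_C$ is an odd delta-matroid. Indeed, applying Lemma \ref{leb} repeatedly gives $(D|_C)_{min}=D_{min}|_C$, and since $C$ is a circuit of $D_{min}$ this restriction is the rank-$(|C|-1)$ uniform matroid on $C$ (every proper subset of $C$ is independent while $C$ itself is dependent). Hence the minimum feasible size of $D|_C$ is $|C|-1$; as $D|_C$ has ground set $C$, every feasible set has size $|C|-1$ or $|C|$, and $C$ is the only subset of $C$ of size $|C|$. Therefore $D|_C$ is odd if and only if $C\in\mathcal F(D|_C)$, and the lemma becomes: $D$ is odd if and only if some circuit $C$ of $D_{min}$ has $D|_C$ odd. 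With this reformulation the backward direction is immediate and needs no binarity: $D|_C$ is a minor of $D$ obtained purely by deletions, and since every minor of an even delta-matroid is even, $D|_C$ odd forces $D$ odd.

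For the forward direction I would invoke the binary representation. Fix a base $B$ of $D_{min}$, so $B\in\mathcal F$ has the minimum size $r$. Then $\emptyset=B\Delta B\in\mathcal F(D*B)$, so $D*B$ is normal, and being a twist of a binary delta-matroid it is again binary; by Bouchet's transfer of strong representations between normal delta-matroids related by a twist (quoted above), $D*B=D(A')$ for a symmetric matrix $A'$ over $GF(2)$. The feasible sets of $D$ have sizes $|B\Delta X|=r+|X|-2|X\cap B|\equiv r+|X|\pmod 2$ as $X$ ranges over the sets with $A'[X]$ nonsingular, so $D$ is even precisely when every nonsingular principal submatrix of $A'$ has even order; for a symmetric matrix over $GF(2)$ this happens exactly when the diagonal of $A'$ vanishes (zero diagonal means alternating, hence even rank). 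Since $D$ is odd, some diagonal entry $A'_{ee}=1$, i.e. $\{e\}\in\mathcal F(D*B)$, equivalently $B\Delta e\in\mathcal F(D)$. Minimality of $r$ rules out $e\in B$ (which would give the feasible set $B-e$ of size $r-1$), so $e\notin B$ and $B\cup e\in\mathcal F(D)$. Let $C$ be the fundamental circuit of $e$ with respect to $B$ in $D_{min}$; then $e\in C\subseteq B\cup e$ and $C$ is a circuit of $D_{min}$.

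It remains to show $D|_C$ is odd, i.e. $C\in\mathcal F(D|_C)$, and this restriction computation is the main obstacle, because deleting a coloop secretly contracts it and so one cannot simply intersect feasible sets with $C$. I would work inside $S=B\cup e$ and argue in two steps. First, $B\cup e\in\mathcal F(D|_S)$: the set $B\cup e$ is a feasible set of $D$ disjoint from $E-S$, and no element of $E-S$ is ever a coloop in the course of the deletions defining $D|_S$ (a coloop lies in every feasible set, yet $B\cup e$ avoids $E-S$), so $B\cup e$ survives every deletion and stays feasible. Second, $(D|_S)_{min}=D_{min}|_S$ is a corank-one matroid whose unique circuit is $C$, so every element of $S-C=B\setminus C$ is a coloop of $D_{min}|_S$ and hence, by Lemma \ref{lea}, a coloop of $D|_S$. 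Passing from $D|_S$ to $D|_C=(D|_S)|_C$ deletes exactly these coloops, which amounts to contracting them and thus removing them from every feasible set; applied to $B\cup e$ this yields $(B\cup e)\setminus(B\setminus C)=C\in\mathcal F(D|_C)$, completing the forward direction.

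In summary, the representation is used only to produce the feasible set $B\cup e$ of size $r+1$ attached to a base $B$ of $D_{min}$; Lemma \ref{lea} together with the coloop bookkeeping then converts it into the full-ground-set feasible set $C$ of the restriction. The delicate point throughout is that ``$C\in\mathcal F(D|_C)$'' is about the restriction and not about $C\in\mathcal F(D)$: the two differ exactly because of the coloops in $E-C$, which is why the seemingly harmless restriction step carries the real weight of the proof.
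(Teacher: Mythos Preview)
Your proof is correct and takes a genuinely different route from the paper's. The paper argues the forward direction by minimality: choose $C\subseteq E$ minimal with $D|_C$ odd; one first shows every $e\in C$ is a non-coloop of $D|_C$ and that the unique feasible set of ``wrong'' parity must be $C$ itself, and then rules out the possibility $r((D|_C)_{min})\le |C|-3$ by invoking the representation (if $(D|_C)*B$ had no singleton it would come from an antisymmetric matrix and be even). You instead construct $C$ explicitly: from a base $B$ of $D_{min}$ the representation of $D*B$ hands you an element $e\notin B$ with $B\cup e$ feasible, $C$ is taken to be the fundamental circuit of $e$ with respect to $B$, and the coloop bookkeeping via Lemma~\ref{lea} pushes feasibility of $B\cup e$ down to $C\in\mathcal F(D|_C)$.

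Both arguments hinge on the same representation fact (a normal binary delta-matroid with no feasible singleton is even), but you deploy it at the outset on $D*B$ to \emph{produce} the element $e$, whereas the paper deploys it at the end on $(D|_C)*B$ to close a contradiction. Your approach is more constructive and makes the role of binarity completely transparent; the paper's minimality argument, on the other hand, explains more intrinsically why the smallest odd restriction must sit over a circuit. Your opening reformulation (that for a circuit $C$ of $D_{min}$ one has $C\in\mathcal F(D|_C)$ if and only if $D|_C$ is odd) is also a clean observation that the paper only uses implicitly in its backward direction.
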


\begin{proof}
($\Rightarrow$) Let $C$ be a subset of $E$ such that $D|_{C}$ is an odd delta-matroid and  $D|_{C-e}$ is an even delta-matroid for any $e\in C$.
There exists a feasible set $F$ of $D|_{C}$ such that the size of $F$ and the rank of $(D|_{C})_{min}$ have different parity. For any $e\in C$, it obvious that $e$ is not a coloop of $D|_{C}$. Otherwise  $D|_{C-e}$ is also an odd delta-matroid, contradicting the choice of $C$. Moreover, $e$ is not a coloop of $(D|_{C})_{min}$ by Lemma \ref{lea}. Hence, there is a base $B$ of  $(D|_{C})_{min}$ such that $e\notin B$. If $e\notin F$, then $B, F\in \mathcal{F}(D|_{C-e})$. It follows that $D|_{C-e}$ is an odd delta-matroid by $|B|$ and $|F|$ having opposite parity. We can see that $e\in F$, then $F=C$.
Thus $C\in \mathcal{F}(D|_{C})$.

If $C$ is a single element set, we may assume that $C=\{e\}$, then $$D|_{C}=(\{e\}, \{\emptyset, \{e\}\}).$$ The necessity is easily verified.
Otherwise, $|C|\geq 2$. Suppose that  $C$ is not a circuit of $D_{min}$. Note that $(D|_{C})_{min}=D_{min}|_{C}$ by Lemma \ref{leb}. Thus, $C$ is not a circuit of $(D|_{C})_{min}$.  Since $C$ is not a circuit of $(D|_{C})_{min}$ and any element of $C$ is not a coloop of $(D|_{C})_{min}$, we have
$$r((D|_{C})_{min})\leq |C|-2.$$
Since $|C|$ and $r((D|_{C})_{min})$ have different parity, we see that  $$r((D|_{C})_{min})< |C|-2.$$ Any minor of a binary delta-matroid is binary, this gives $D|_{C}$ is a binary delta-matroid.

We claim that there exists a feasible set $F'\in \mathcal{F}(D|_{C})$ such that $|F'|=r((D|_{C})_{min})+1$. Otherwise, for any base $B$ of $(D|_{C})_{min}$, $(D|_{C})*B$ is a normal delta-matroid and $(D|_{C})*B$ doesn't contain a singleton. There is a binary antisymmetric matrix $A$ such that $D(A)=(D|_{C})*B$. It follows that $(D|_{C})*B$ is an even delta-matroid, a contradiction, since $D|_{C}$ is an odd delta-matroid and evenness is compatible with twist of delta-matroid. We see that $F'=C$ as above. But $$|F'|=r((D|_{C})_{min})+1<|C|-1,$$ a contradiction. Hence, $C$ is a circuit of $D_{min}$.

($\Leftarrow$)  Since $C$ is a circuit of $D_{min}$, we see that $C$ is a circuit of $(D|_{C})_{min}$. Then $C-e$ is a feasible set of $D|_{C}$ for any $e\in C$. Hence $D|_{C}$ is an odd delta-matroid by $C\in \mathcal{F}(D|_{C}).$ It follows that $D$ is an odd delta-matroid.
\end{proof}

\begin{remark}
The above does not hold for non-binary delta-matroid. For example, $$D=(\{1, 2, 3\}, \{\emptyset, \{1, 2\}, \{2, 3\}, \{1, 3\}, \{1, 2, 3\}\}).$$ $D$ is one of the minimal non-binary delta-matroids as shown in \cite{AB3} and  $$\mathcal{C}(D_{min})=\{\{1\}, \{2\}, \{3\}\}$$ but $$\mathcal{F}(D|_1)=\mathcal{F}(D|_2)=\mathcal{F}(D|_3)=\{\emptyset\},$$
that is, $$\{1\}\notin \mathcal{F}(D|_1), \{2\}\notin \mathcal{F}(D|_2), \{3\}\notin \mathcal{F}(D|_3).$$
\end{remark}

\begin{theorem}
Let $D=(E, \mathcal{F})$ be a binary even delta-matroid. Then $D$ is bipartite if and only if $D+E$ is an even delta-matroid.
\end{theorem}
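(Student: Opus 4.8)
The plan is to reduce the statement to Lemma \ref{lec} applied to the delta-matroid $D+E$, after first pinning down how $+E$ interacts with the lower matroid and with restriction. The first thing I would record is that $D+E$ is again binary: loop complementation preserves binariness (on a symmetric $GF(2)$-representation it merely toggles the diagonal entries indexed by $E$), so Lemma \ref{lec} is legitimately available for $D+E$. By that lemma it then suffices to decide, in terms of the circuits of $(D+E)_{min}$, whether $D+E$ is odd.

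Next I would compute $(D+E)_{min}$. Using the Brijder--Hoogeboom criterion that $Y\in\mathcal{F}(D+E)$ iff $|\{Z\in\mathcal{F}:Z\subseteq Y\}|$ is odd, set $r:=r(D_{min})$. If $|Y|<r$ then no feasible set lies inside $Y$, so $Y\notin\mathcal{F}(D+E)$; if $|Y|=r$ then a feasible $Z\subseteq Y$ must equal $Y$, so $Y\in\mathcal{F}(D+E)$ iff $Y\in\mathcal{F}$. Hence the minimum-cardinality feasible sets of $D+E$ are exactly the bases of $D_{min}$, giving $(D+E)_{min}=D_{min}$; in particular the circuits of $(D+E)_{min}$ are precisely the circuits of $D_{min}$.

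The heart of the argument is to evaluate, for each circuit $C$ of $D_{min}$, whether $C\in\mathcal{F}((D+E)|_C)$. I would first check the compatibility $(D+E)|_C=(D|_C)+C$ directly from the feasibility criterion, noting that for $Y\subseteq C$ every $Z\in\mathcal{F}$ with $Z\subseteq Y$ already lies in $C$. Applying the criterion once more (with both the loop-complementation set and the test set equal to $C$) turns the condition $C\in\mathcal{F}((D+E)|_C)$ into ``$|\mathcal{F}(D|_C)|$ is odd''. Now I would exploit that $D$ is even. By Lemma \ref{leb}, $(D|_C)_{min}=D_{min}|_C$, and since $C$ is a circuit of $D_{min}$ this lower matroid is the rank-$(|C|-1)$ uniform matroid $U_{|C|-1,|C|}$ on $C$. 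Evenness of $D|_C$ forces every feasible set to have size $\equiv|C|-1\pmod 2$; as the minimum size is $|C|-1$ and the ground set has size $|C|$, the only admissible size is $|C|-1$. Thus $D|_C$ coincides with that circuit matroid, and $|\mathcal{F}(D|_C)|=\binom{|C|}{|C|-1}=|C|$.

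Combining these, Lemma \ref{lec} yields that $D+E$ is odd iff there is a circuit $C$ of $D_{min}$ with $|\mathcal{F}(D|_C)|=|C|$ odd, that is, iff $D_{min}$ has an odd circuit, i.e.\ iff $D$ is not bipartite; the contrapositive gives ``$D+E$ even $\iff$ $D$ bipartite''. I expect the main obstacle to be the third paragraph: verifying the restriction--loop-complementation compatibility and, above all, recognizing that restricting an even delta-matroid to a circuit of its lower matroid collapses it to a uniform matroid, which is exactly what makes the parity count reduce cleanly to $|C|$. The other point needing care is the binariness of $D+E$, since Lemma \ref{lec} is invoked there rather than for $D$ itself.
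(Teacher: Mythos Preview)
Your proposal is correct and uses the same essential ingredients as the paper's proof: Lemma~\ref{lec} applied to $D+E$, the identification $(D+E)_{min}=D_{min}$, the compatibility $(D+E)|_C=(D|_C)+C$, and the fact that for a circuit $C$ of $D_{min}$ the even restriction $D|_C$ is forced to be $U_{|C|-1,|C|}$. Your packaging is somewhat tighter than the paper's: you treat both directions simultaneously via the single equivalence $C\in\mathcal{F}((D+E)|_C)\Leftrightarrow |\mathcal{F}(D|_C)|\text{ odd}\Leftrightarrow |C|\text{ odd}$, whereas the paper runs two separate contradiction arguments, in one direction computing $\mathcal{F}((D|_C)+C)$ first and inverting back to $\mathcal{F}(D|_C)$; you also make explicit that $D+E$ is binary (needed to invoke Lemma~\ref{lec} there), a point the paper uses tacitly.
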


\begin{proof}
($\Rightarrow$) Suppose that $D+E$ is an odd delta-matroid. Then there exists a circuit $C$ of $(D+E)_{min}$ such that $C\in \mathcal{F}((D+E)|_{C})$ by Lemma \ref{lec}. The bases of $(D+E)_{min}$ are the same as those of $D_{min}$, so $C$ is a circuit of $D_{min}$. Since $D$ is bipartite, the size of $C$ is even. It is easy to check that  $$(D+E)|_{C}=(D|_{C})+C$$ by the definition of loop complementation. Since $C$ is a circuit of $(D+E)_{min}$,  it follows that $C$ is a circuit of $$(D+E)_{min}|_{C}=((D+E)|_{C})_{min}=((D|_{C})+C)_{min}$$ and $$C\in \mathcal{F}((D+E)|_{C})=\mathcal{F}((D|_{C})+C).$$ Therefore $$\mathcal{F}((D|_{C})+C)=\{C-e: e\in C\} \cup \{C\}.$$ We know $X\in \mathcal{F}(D|_{C})$ if and only if $|\{Z \in \mathcal{F}((D|_{C})+C): Z \subseteq X\}|$ is odd. Hence $$\mathcal{F}(D|_{C})=\{C-e: e\in C\} \cup \{C\}.$$ Thus $D|_{C}$ is an odd delta-matroid. This contradicts the fact that $D$ is an even delta-matroid. Therefore $D+E$ is an even delta-matroid.

($\Leftarrow$) Assume that  $D$ is not bipartite. Then there exists a circuit $C$ of $D_{min}$ and the size of $C$ is odd.
It is easy to check that $C$ is a circuit of $D_{min}|_{C}$, that is, $C$ is a circuit of $(D|_{C})_{min}$.
Since $D$ is an even delta-matroid, $D|_{C}$ is also an even delta-matroid. Then $$\mathcal{F}(D|_{C})=\{C-e: e\in C\}.$$ It follows that $$\mathcal{F}((D|_{C})+C)=\{C-e: e\in C\}\cup \{C\}.$$Thus $(D|_{C})+C$ is an odd delta-matroid, that is, $(D+E)|_{C}$ is an odd delta-matroid. This contradicts the fact that $D+E$ is an even delta-matroid. Hence $D$ is bipartite.
\end{proof}

A standard result in graph theory is that a plane graph $G$ is Eulerian if and only if the dual of $G$ is bipartite. This result also holds for binary matroids. This fact only holds for general ribbon graphs in one direction: the dual of any bipartite ribbon graph is Eulerian.  In the following, we extend the result from ribbon graphs to the class of delta-matroids that arise as twists of binary matroids.

\begin{lemma}[\cite{DW}]\label{led}
A bianry matroid is Eulerian if and only if its dual matroid is bipartite.
\end{lemma}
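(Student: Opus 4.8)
The plan is to first restate the conclusion in terms of cocircuits. By definition $M^{*}$ is bipartite precisely when every circuit of $M^{*}$ is even, i.e. when every cocircuit of $M$ has even cardinality. On the other side, $M$ is Eulerian precisely when the whole ground set $E$ can be written as a disjoint union of circuits. So the target reduces to a single equivalence: for a binary matroid $M$, the set $E$ is a disjoint union of circuits if and only if every cocircuit of $M$ has even cardinality.

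The key tools I would use are two standard facts about binary matroids. First, binary orthogonality: for every circuit $C$ and every cocircuit $C^{*}$ of $M$, the intersection $|C\cap C^{*}|$ is even. Second, the cycle-space description: regarding subsets of $E$ as vectors in $GF(2)^{E}$, the span $\mathcal{Z}(M)$ of the circuits (the cycle space) and the span $\mathcal{Z}(M^{*})$ of the cocircuits (the cocycle space) are orthogonal complements, and---this is where binariness is essential---a subset $X\subseteq E$ is a disjoint union of circuits of $M$ if and only if $X\in \mathcal{Z}(M)$. Combining these, $X$ is a disjoint union of circuits if and only if $|X\cap C^{*}|$ is even for every cocircuit $C^{*}$.

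With these in hand both directions of the lemma fall out at once by taking $X=E$. Indeed $|E\cap C^{*}|=|C^{*}|$, so $E$ is a disjoint union of circuits if and only if $|C^{*}|$ is even for every cocircuit $C^{*}$. Read left to right this says an Eulerian $M$ has all cocircuits even, hence $M^{*}$ is bipartite; read right to left it says that if $M^{*}$ is bipartite then $E$ is a disjoint union of circuits, hence $M$ is Eulerian. For the forward direction one can also argue directly: if $E=C_{1}\cup\cdots\cup C_{p}$ with the $C_{i}$ disjoint circuits, then $|C^{*}|=\sum_{i}|C^{*}\cap C_{i}|$ is a sum of even numbers by orthogonality, hence even.

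The main obstacle is the binary cycle-space fact that membership in $\mathcal{Z}(M)$ coincides with being a disjoint union of circuits; this is exactly where the hypothesis that $M$ is binary is used and cannot be dropped---for $U_{2,4}$ the symmetric difference of two $3$-element circuits is a $2$-element set that is not a disjoint union of circuits. Proving it cleanly rests on two ingredients: that symmetric differences of circuits in a binary matroid are again disjoint unions of circuits, and a rank/corank dimension count, namely $\dim\mathcal{Z}(M)=|E|-r(M)$ and $\dim\mathcal{Z}(M^{*})=r(M)$, which together with orthogonality identifies $\mathcal{Z}(M)$ and $\mathcal{Z}(M^{*})$ as complementary subspaces. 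Once these are granted, the displayed equivalence, and hence the lemma, is immediate.
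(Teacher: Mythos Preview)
The paper does not supply a proof of this lemma; it simply cites Welsh \cite{DW} for the result. Your argument is correct and is essentially the classical proof: reduce the statement to ``$E$ is a disjoint union of circuits if and only if every cocircuit has even size,'' then use binary orthogonality for the forward direction and the $GF(2)$ cycle-space characterisation (that in a binary matroid the cycle space coincides with the set of disjoint unions of circuits, and is the orthogonal complement of the cocycle space) for the converse. There is nothing to compare against in the paper itself, but your write-up matches the standard treatment and would be an acceptable substitute for the citation.
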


\begin{lemma}[\cite{CISR}]\label{lee}
Let $M=(E, \mathcal{I})$ be a matroid and $A\subseteq E$. Let $D=M*A$. Then $D_{min}=M/A\oplus (M\backslash A^{c})^{*}$ and $D_{max}=M\backslash A\oplus (M/ A^{c})^{*}$.
\end{lemma}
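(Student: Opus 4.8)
The plan is to read off the feasible sets of $D=M*A$ directly from the bases of $M$ and then sort them by size. By definition the feasible sets of $D$ are exactly the sets $A\Delta B$ with $B\in\mathcal{B}(M)$, and since
$$|A\Delta B|=|A|+|B|-2|A\cap B|=|A|+r(M)-2|A\cap B|,$$
with $|A|$ and $r(M)$ both fixed, the minimum-cardinality feasible sets are precisely those $A\Delta B$ for which $|A\cap B|$ is as large as possible, while the maximum-cardinality ones are those for which $|A\cap B|$ is as small as possible. Thus the entire lemma reduces to describing the bases of $M$ that meet $A$ in the most, respectively the fewest, elements.

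The second ingredient is the disjoint splitting $A\Delta B=(A\setminus B)\cup(B\cap A^c)$, whose first part lies in $A$ and whose second part lies in $A^c=E-A$. This is exactly the form of a member of a direct sum across the partition $E=A\cup A^c$, so it suffices to recognise the two parts as bases of suitable matroids on $A$ and on $A^c$. For $D_{min}$ I would argue that a basis $B$ maximises $|A\cap B|$ precisely when $|A\cap B|=r(M|_A)$, i.e. when $B\cap A$ is a basis of $M|_A=M\setminus A^c$ and, by rank additivity for restriction and contraction, $B\cap A^c$ is a basis of $M/A$. Then the $A$-part $A\setminus B$ is the complement in $A$ of a basis of $M|_A$, hence a basis of $(M\setminus A^c)^*$, and the $A^c$-part is a basis of $M/A$; this yields $D_{min}=M/A\oplus(M\setminus A^c)^*$. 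The argument for $D_{max}$ is dual: minimising $|A\cap B|$ is the same as maximising $|B\cap A^c|=r(M\setminus A)$, so $B\cap A^c$ is a basis of $M\setminus A$ and $B\cap A=B\setminus A^c$ is a basis of $M/A^c$; reading off the two parts gives $D_{max}=M\setminus A\oplus(M/A^c)^*$.

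The step I expect to be the main obstacle is the \emph{free recombination} needed to turn these containments into equalities of set systems: I must check not only that each extremal feasible set of $D$ splits as above, but also that every pairing of a basis of $M|_A$ (respectively $M\setminus A$) with a basis of $M/A$ (respectively $M/A^c$) is actually realised by some basis $B$ of $M$, so that no member of the claimed direct sum is missing. This is precisely the standard matroid fact that $r(M)=r(M|_T)+r(M/T)$ for every $T\subseteq E$ and that a basis of $M|_T$ together with a basis of $M/T$ always assembles into a basis of $M$; I would invoke this fact (or derive it from the augmentation axiom I3) with $T=A$ for the minimum case and $T=A^c$ for the maximum case, which closes both directions simultaneously.
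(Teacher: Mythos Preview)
Your argument is correct. The paper does not prove this lemma at all: it is quoted from \cite{CISR} and used as a black box, so there is no ``paper's own proof'' to compare against. What you have written is essentially the standard derivation one finds in the literature: identify $\mathcal{F}(D)=\{A\Delta B:B\in\mathcal{B}(M)\}$, observe that $|A\Delta B|=|A|+r(M)-2|A\cap B|$ so that extremal feasible sets correspond to bases maximising or minimising $|A\cap B|$, and then split $A\Delta B=(A\setminus B)\cup(B\cap A^{c})$ across the partition $E=A\cup A^{c}$.

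The one point worth tightening is the ``free recombination'' step you flagged. It is indeed true that for \emph{every} basis $B_{A}$ of $M|_{A}$ and \emph{every} basis $X$ of $M/A$ the union $B_{A}\cup X$ is a basis of $M$ (not merely for some pairing). The cleanest justification is via rank and closure: since $B_{A}$ spans $A$ in $M$, one has $r_{M}(B_{A}\cup X)=r_{M}(A\cup X)=r_{M}(A)+r_{M/A}(X)=|B_{A}|+|X|$, so $B_{A}\cup X$ is independent of the correct size. Invoking this (rather than only the rank identity $r(M)=r(M|_{A})+r(M/A)$, which by itself does not guarantee that arbitrary pairs recombine) makes both inclusions in $D_{\min}=M/A\oplus(M\setminus A^{c})^{*}$ airtight; the $D_{\max}$ case is then the same with $A$ and $A^{c}$ interchanged.
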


\begin{lemma}[\cite{JO}]\label{lef}
Let $C$ be a circuit of a binary matroid $M$ and $e$ be an element of $E(M)-C$. Then, in $M/e$, either $C$ is a circuit, or $C$ is a disjoint union of two circuits.
\end{lemma}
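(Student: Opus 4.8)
The plan is to track how the rank of $C$ changes under the contraction of $e$ and then to exploit the circuit-elimination behaviour that characterises binary matroids. Since $C$ is a circuit, $e$ is not a loop of $M$, so $r_{M/e}(X)=r_M(X\cup e)-1$ for every $X\subseteq E(M)-e$. Applying this to $X=C$ and using $r_M(C)=|C|-1$, the quantity $r_M(C\cup e)$ equals either $|C|-1$ or $|C|$, according to whether $e\in \mathrm{cl}_M(C)$ or not. This single dichotomy produces the two outcomes in the statement, so the proof splits into two cases.

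In the case $e\notin \mathrm{cl}_M(C)$, I would show that $C$ is still a circuit of $M/e$. Here $r_{M/e}(C)=|C|-1$, so $C$ is dependent in $M/e$; for minimality, note that for each $f\in C$ the set $C-f$ is independent in $M$ and $e\notin\mathrm{cl}_M(C)$ forces $e\notin\mathrm{cl}_M(C-f)$, whence $r_{M/e}(C-f)=|C-f|$ and $C-f$ is independent in $M/e$. Thus $C$ is a circuit of $M/e$.

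In the case $e\in\mathrm{cl}_M(C)$ I would show that $C$ is a disjoint union of exactly two circuits. Now $r_{M/e}(C)=|C|-2$, so $C$ is dependent but is not a circuit of $M/e$; choose a circuit $C_1\subsetneq C$ of $M/e$. Using the description of the circuits of a contraction as the minimal nonempty sets $C'-e$ with $C'\in\mathcal{C}(M)$, together with the observation that a circuit of $M$ avoiding $e$ and lying inside $C$ would contradict the minimality of $C$, I can take a circuit $D_1\in\mathcal{C}(M)$ with $e\in D_1\subseteq C\cup e$ and $C_1=D_1-e$. The decisive step invokes binarity: the symmetric difference $C\Delta D_1$ is a disjoint union of circuits of $M$, and a short computation gives $C\Delta D_1=(C-C_1)\cup e$. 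Since exactly one circuit of this disjoint union contains $e$, while any other constituent circuit would be a circuit of $M$ contained in $C-C_1\subseteq C$ and hence forbidden by minimality of $C$, the union reduces to the single circuit $D_2=(C-C_1)\cup e$. Consequently $C_2:=C-C_1=D_2-e$ is a circuit of $M/e$, and $C=C_1\sqcup C_2$ is the required partition.

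I expect the main obstacle to be this last case, and specifically the verification that the binary symmetric-difference decomposition yields exactly two circuits rather than more. The entire force of the argument there comes from the minimality of $C$ as a circuit of $M$, which excludes any proper sub-circuit of $M$ sitting inside $C$ and thereby collapses the decomposition to precisely two pieces.
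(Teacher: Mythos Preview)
The paper does not supply its own proof of this lemma; it is quoted from Oxley's textbook \cite{JO} and used as a black box. Your argument is essentially the standard one and is correct in outline, with two small points worth tightening.

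First, the opening clause ``Since $C$ is a circuit, $e$ is not a loop of $M$'' is a non-sequitur: nothing about $C$ prevents $e\in E(M)-C$ from being a loop. This is harmless, because if $e$ is a loop then $M/e=M\setminus e$ and $C$ remains a circuit; just state that you may assume $e$ is not a loop.

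Second, in the case $e\in\mathrm{cl}_M(C)$ you conclude that $D_2=(C-C_1)\cup e$ is a circuit of $M$ and then assert that $C_2=D_2-e$ is a circuit of $M/e$. One line is missing: $D_2-e$ is a nonempty set of the form $C'-e$ with $C'\in\mathcal{C}(M)$, but you still need minimality. If some circuit $C'$ of $M$ had $\emptyset\neq C'-e\subsetneq C_2$, then either $e\notin C'$ and $C'\subsetneq C$ contradicts the minimality of $C$, or $e\in C'$ and $C'\subsetneq D_2$ contradicts the minimality of $D_2$. With that check your decomposition $C=C_1\sqcup C_2$ into two circuits of $M/e$ is complete.
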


\begin{theorem}\label{tha}
Let $M=(E, \mathcal{I})$ be a binary matroid, $A\subseteq E$ and $D=M*A$. If $D$ is bipartite, then $D^{*}$ is Eulerian.
\end{theorem}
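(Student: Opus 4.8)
The plan is to peel off the delta-matroid wrapping with Lemma~\ref{lee}, reduce the claim to statements about binary matroid minors of $M$, and then concentrate all the real work into a single fact about binary matroids: a deletion being Eulerian forces the matching contraction to be Eulerian.

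First I would unwind the hypotheses. Since $D=M*A$, Lemma~\ref{lee} gives $D_{min}=M/A\oplus(M\backslash A^{c})^{*}$; because the circuits of a direct sum of matroids are exactly the circuits of the two summands, $D$ being bipartite means that both $M/A$ and $(M\backslash A^{c})^{*}$ are bipartite. On the other side, $D^{*}=D*E=M*(A\Delta E)=M*A^{c}$, so a second application of Lemma~\ref{lee} yields $(D^{*})_{min}=M/A^{c}\oplus(M\backslash A)^{*}$, and $D^{*}$ is Eulerian iff both $M/A^{c}$ and $(M\backslash A)^{*}$ are Eulerian. Using Lemma~\ref{led} to swap ``bipartite'' for ``Eulerian of the dual'', the hypothesis becomes ``$M\backslash A^{c}$ Eulerian and $M^{*}\backslash A$ Eulerian'' (the latter from $M/A$ bipartite $\Leftrightarrow (M/A)^{*}=M^{*}\backslash A$ Eulerian), while the desired conclusion becomes ``$M/A^{c}$ Eulerian and $M^{*}/A$ Eulerian'' (using $(M\backslash A)^{*}=M^{*}/A$).

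So the whole theorem collapses to a single key lemma: if $N$ is a binary matroid and $X\subseteq E(N)$, then $N\backslash X$ Eulerian implies $N/X$ Eulerian; the theorem then follows by applying this to $(N,X)=(M,A^{c})$ and to $(N,X)=(M^{*},A)$. To prove the key lemma I would use Lemma~\ref{lef}. Writing $E(N)-X$ as a disjoint union $C_{1}\cup\cdots\cup C_{p}$ of circuits of $N\backslash X$ --- each a circuit of $N$ disjoint from $X$ --- I would contract the elements of $X$ one at a time. At every step Lemma~\ref{lef} says a circuit disjoint from the contracted element either survives as a circuit or breaks into a disjoint union of two circuits, in both cases supported on the same element set; so after contracting all of $X$, each $C_{i}$ has become a disjoint union of circuits of $N/X$ contained in $C_{i}$. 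Since the $C_{i}$ partition $E(N/X)=E(N)-X$, this exhibits the ground set of $N/X$ as a disjoint union of circuits, that is, $N/X$ is Eulerian.

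The only genuinely substantive step is this key lemma, and within it the point that needs care is the bookkeeping of disjointness across the iterated contraction: one must check that each circuit produced by Lemma~\ref{lef} stays inside its parent $C_{i}$, hence remains disjoint from the elements of $X$ still to be contracted, so that Lemma~\ref{lef} continues to apply at the next step. Everything else is formal manipulation with Lemmas~\ref{led} and~\ref{lee} together with the direct-sum circuit decomposition. As a sanity check and an explanation of why only one direction of the plane-graph statement survives, I would note the $GF(2)$ reformulation: $D$ bipartite is equivalent to $A$ being a cycle and $A^{c}$ a cocycle of $M$, whereas $D^{*}$ Eulerian only requires some cycle of $M$ to contain $A$ and some cocycle to contain $A^{c}$ --- conditions that are strictly weaker and so cannot in general be reversed.
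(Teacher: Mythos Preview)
Your proof is correct and rests on the same two ingredients as the paper's: the decomposition of $D_{min}$ and $D_{max}$ via Lemma~\ref{lee}, and the iterated use of Lemma~\ref{lef} to show that a circuit of $N$ disjoint from $X$ becomes a disjoint union of circuits in $N/X$.

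The only difference is packaging. The paper routes the argument through the intermediate statement ``$D_{max}$ is bipartite'': from Lemma~\ref{lef} it deduces that every circuit of $M\backslash A$ (and of $M^{*}\backslash A^{c}$) is a disjoint union of circuits of $M/A$ (respectively $M^{*}/A^{c}$), hence has even size when $D_{min}$ is bipartite; then it invokes $D_{max}=((D^{*})_{min})^{*}$ and Lemma~\ref{led}. You instead dualise first with Lemma~\ref{led} and isolate the clean matroid lemma ``$N\backslash X$ Eulerian $\Rightarrow$ $N/X$ Eulerian'', which is exactly the Eulerian reading of the same circuit-decomposition fact. Your version has the advantage of producing a reusable stand-alone lemma and the pleasant $GF(2)$ sanity check; the paper's version records the extra information that $D_{max}$ is bipartite along the way.
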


\begin{proof}
Firstly, we show that a circuit $C$ of $M\setminus A$ is a disjoint union of some circuits of $M/A$. If $A=\emptyset$, then there is nothing to prove, so we can
assume that $A=\{e_{1}, \cdots, e_{n}\}$. Then either $C$ is a circuit of $M/e_{1}$, or $C=C_{1}\cup C_{2}$ which $C_{1}, C_{2}$  are two disjoint circuits of $M/e_{1}$ by Lemma \ref{lef}. Repeat the process above, it follows that $C$ is a disjoint union of some circuits of $M/A$.  For any circuit $C'$ of $D_{max}$, since $D_{max}=M\backslash A\oplus (M/ A^{c})^{*}=M\backslash A\oplus M^{*}\backslash A^{c}$ by Lemma \ref{lee}, $C'$ is a circuit of $M\backslash A$ or $M^{*}\backslash A^{c}$. Thus $C'$ is a disjoint union of some circuits of $M/A$ or $M^{*}/A^{c}$. Since $D_{min}=M/A\oplus (M\backslash A^{c})^{*}=M/A\oplus M^{*}/A^{c}$ by Lemma \ref{lee} and $D$ is bipartite, it follows that all of the sizes of circuits of $M/A$ and $M^{*}/A^{c}$ are even. Therefore the size of $C'$ is even. Hence $D_{max}$ is bipartite.  It is easy to check that $D_{max}=((D^{*})_{min})^{*}$. By Lemma \ref{led}, $(D^{*})_{min}$ is Eulerian, that is, $D^{*}$ is Eulerian.
\end{proof}

\begin{remark}
The converse of Theorem \ref{tha} is not true. A counterexample is given as shown below. Let $M=(\{1, 2\}, \{\{1\}, \{2\}\})$ and $D=M*1=(\{1, 2\}, \{\emptyset, \{1, 2\}\})$. Then $D^{*}=(\{1, 2\}, \{\emptyset, \{1, 2\}\})$. It is easy to check that $D^{*}$ is Eulerian, but $D$ is not bipartite.
\end{remark}

\begin{theorem}\label{thc}
Let $M=(E, \mathcal{I})$ be a binary matroid, $A\subseteq E$ and $D=M*A$. Then:

\begin{enumerate}
\item $D$ is bipartite if and only if $M\setminus A^{c}$ and $M^{*}\setminus A$ are Eulerian;
\item $D$ is Eulerian if and only if $M\setminus A^{c}$ and $M^{*}\setminus A$ are bipartite.
\end{enumerate}

\end{theorem}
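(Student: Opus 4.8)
The plan is to reduce both statements to properties of the lower matroid $D_{min}$, which by Lemma \ref{lee} decomposes as a direct sum. Since $(M\setminus A^{c})^{*}=M^{*}/A^{c}$, Lemma \ref{lee} gives
$$D_{min}=M/A\oplus (M\setminus A^{c})^{*}=M/A\oplus M^{*}/A^{c},$$
a direct sum whose two summands have ground sets $A^{c}$ and $A$ respectively. Both summands are minors of binary matroids (note $M^{*}$ is binary since $M$ is), hence binary, so Lemma \ref{led} is applicable to each. By definition $D$ is bipartite (Eulerian) precisely when $D_{min}$ is, so the whole problem becomes deciding when the direct sum $M/A\oplus M^{*}/A^{c}$ is bipartite, respectively Eulerian.

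For part (1) I would invoke the fact recorded just after the definition of direct sum, namely $\mathcal{C}(D_{1}\oplus D_{2})=\mathcal{C}(D_{1})\cup \mathcal{C}(D_{2})$. Thus every circuit of $D_{min}$ is a circuit of $M/A$ or of $M^{*}/A^{c}$, so all circuits of $D_{min}$ have even cardinality if and only if both $M/A$ and $M^{*}/A^{c}$ do; that is, $D$ is bipartite iff $M/A$ and $M^{*}/A^{c}$ are both bipartite. I would then apply Lemma \ref{led} to each summand: $M/A$ is bipartite iff $(M/A)^{*}=M^{*}\setminus A$ is Eulerian, and $M^{*}/A^{c}$ is bipartite iff $(M^{*}/A^{c})^{*}=M\setminus A^{c}$ is Eulerian (using $(M^{*})^{*}=M$). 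Combining, $D$ is bipartite iff $M\setminus A^{c}$ and $M^{*}\setminus A$ are both Eulerian, which is exactly (1).

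Part (2) runs in parallel but needs one extra observation: a direct sum of matroids is Eulerian iff each summand is. Here I would argue straight from the definition of Eulerian, that a partition of $E(D_{min})$ into disjoint circuits of $D_{min}$ splits, via the circuit description of the direct sum, into a partition of $A^{c}$ into circuits of $M/A$ and a partition of $A$ into circuits of $M^{*}/A^{c}$, while conversely two such partitions combine into one. Hence $D_{min}$ is Eulerian iff both $M/A$ and $M^{*}/A^{c}$ are. Applying Lemma \ref{led} once more, $M/A$ is Eulerian iff $M^{*}\setminus A$ is bipartite and $M^{*}/A^{c}$ is Eulerian iff $M\setminus A^{c}$ is bipartite, giving (2).

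The content of the argument is carried entirely by the decomposition in Lemma \ref{lee} together with the duality Lemma \ref{led}; the only genuinely new ingredient is the elementary observation that a direct sum is Eulerian (bipartite) exactly when both factors are, and even this is immediate from the circuit description of a direct sum. I therefore expect no serious obstacle, and the main care required is the bookkeeping with the matroid minor/dual identities $(M/A)^{*}=M^{*}\setminus A$ and $(M\setminus A^{c})^{*}=M^{*}/A^{c}$, so that the sets $A$ and $A^{c}$ are matched correctly to the matroids $M^{*}\setminus A$ and $M\setminus A^{c}$ appearing in the statement.
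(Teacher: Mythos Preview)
Your proposal is correct and follows essentially the same route as the paper: decompose $D_{min}$ via Lemma~\ref{lee}, use that a direct sum is bipartite (Eulerian) iff each summand is, and then apply Lemma~\ref{led} together with the standard identities $(M/A)^{*}=M^{*}\setminus A$ and $(M\setminus A^{c})^{*}=M^{*}/A^{c}$. The only difference is that you spell out the direct-sum step and the minor/dual bookkeeping more carefully than the paper does.
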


\begin{proof}
By Lemma \ref{lee}, $D$ is bipartite (Eulerian) if and only if both $M/A$ and $(M\backslash A^{c})^{*}$ are bipartite (Eulerian) if and only if both $(M/A)^{*}$ and $M\backslash A^{c}$ are Eulerian (bipartite) by reason that any minor of a binary matroid is binary and Lemma \ref{led}. So if and only if $M\setminus A^{c}$ and $M^{*}\setminus A$ are Eulerian (bipartite).
\end{proof}

\begin{remark}
Huggett and Moffatt (see \cite{SHM} Theorem 1.2) obtained the same results as Theorem \ref{thc} for embedded graphs.
\end{remark}

\begin{lemma}\label{leg}
If $D=(E, \mathcal{F})$ is a bipartite delta-matroid and $A\subseteq E$, then $D\setminus A$ is a bipartite delta-matroid.
\end{lemma}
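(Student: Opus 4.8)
The plan is to reduce the whole statement to the lower matroid, where being bipartite is purely a statement about circuit cardinalities, and then to exploit the fact that deletion commutes with passing to the lower matroid. Recall that by definition $D\setminus A$ is bipartite precisely when $(D\setminus A)_{min}$ is a bipartite matroid, so the lemma will follow once I can describe $(D\setminus A)_{min}$ and control its circuits.

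First I would upgrade Lemma \ref{leb} from single elements to an arbitrary subset. Writing $A=\{e_{1},\dots,e_{k}\}$ and deleting one element at a time, a straightforward induction on $|A|$ using Lemma \ref{leb} at each step yields
\[(D\setminus A)_{min}=D_{min}\setminus A.\]
The only point requiring attention is the coloop case in each single-element step, but that is exactly what is already handled inside the proof of Lemma \ref{leb} (using that coloops of $D_{min}$ are coloops of $D$ by Lemma \ref{lea}), so the induction goes through without incident.

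Next I would invoke the standard matroid fact that circuits are inherited under deletion: the circuits of $M\setminus A=M|_{E-A}$ are exactly the circuits of $M$ contained in $E-A$. Applying this with $M=D_{min}$, every circuit of $D_{min}\setminus A$ is already a circuit of $D_{min}$. Since $D$ is bipartite, $D_{min}$ is a bipartite matroid, so that circuit has even cardinality. Hence every circuit of $(D\setminus A)_{min}=D_{min}\setminus A$ is even, which says precisely that $(D\setminus A)_{min}$ is bipartite, i.e.\ that $D\setminus A$ is bipartite.

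I do not anticipate any genuine obstacle here: essentially all the content sits in the identity $(D\setminus A)_{min}=D_{min}\setminus A$, and once that is established the matroid side is immediate, since deletion can only restrict the collection of circuits and never enlarge any of them. If there is a subtle step at all, it is merely making sure the set-version of Lemma \ref{leb} is properly justified rather than silently assumed.
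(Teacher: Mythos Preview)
Your proposal is correct and follows essentially the same route as the paper: both arguments reduce everything to the identity $(D\setminus A)_{min}=D_{min}\setminus A$ obtained from Lemma~\ref{leb}, and then use the standard fact that the circuits of $D_{min}\setminus A$ are exactly the circuits of $D_{min}$ lying in $E-A$. The only cosmetic difference is that the paper handles the single-element case and leaves the induction implicit, whereas you spell out the induction explicitly.
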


\begin{proof}
The result is easily verified when $A =\emptyset$.
We just need to verify that when $A=\{e\}$. Since $(D\setminus e)_{min}=D_{min}\setminus e$ by Lemma \ref{leb}, it follows that $$\mathcal{C}((D\setminus e)_{min}) = \mathcal{C}(D_{min}\setminus e)=\{C\subseteq E-e: C\in \mathcal{C}(D_{min})\}.$$ Thus, $D\setminus e$ is a bipartite delta-matroid.
\end{proof}

\begin{theorem}
Let $D=(E, \mathcal{F})$ be a delta-matroid and $A\subseteq E$. If $D*A$ is bipartite, then $D^{*}/A^{c}$ and $D/A$
are both bipartite.
\end{theorem}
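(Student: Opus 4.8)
The plan is to observe that both $D/A$ and $D^{*}/A^{c}$ are, up to the twist–contraction bookkeeping, simply \emph{deletions} of the single delta-matroid $D*A$, and then to invoke Lemma \ref{leg}, which guarantees that deletion preserves bipartiteness. Since $D*A$ is assumed bipartite, the conclusion will then be immediate.

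First I would establish the set-level identity $D/A = (D*A)\setminus A$. The paper records the single-element relation $D/e = (D*e)\setminus e$; combining this with the fact that a twist and a deletion at distinct elements commute (a twist at $e$ does not alter the loop/coloop status of any other element $f$, so $(N\setminus f)*e = (N*e)\setminus f$ whenever $e\neq f$), a straightforward induction on $|A|$ yields $D/A = (D*A)\setminus A$. Because $D*A$ is bipartite, Lemma \ref{leg} applied with deleted set $A$ shows at once that $D/A$ is bipartite.

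Next I would reuse the very same identity for $D^{*}/A^{c}$. Applying $D/A = (D*A)\setminus A$ with $D$ replaced by $D^{*}$ and $A$ replaced by $A^{c}$ gives $D^{*}/A^{c} = (D^{*}*A^{c})\setminus A^{c}$. Since $D^{*} = D*E$, I would compute $D^{*}*A^{c} = (D*E)*A^{c} = D*(E\Delta A^{c}) = D*A$, using that $A^{c}\subseteq E$ forces $E\Delta A^{c} = E - A^{c} = A$. Hence $D^{*}/A^{c} = (D*A)\setminus A^{c}$, and Lemma \ref{leg}, now with deleted set $A^{c}$, shows $D^{*}/A^{c}$ is bipartite as well.

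The genuinely routine part is applying Lemma \ref{leg}; the only step needing care is the passage from the single-element relation $D/e = (D*e)\setminus e$ to its set-level form $D/A = (D*A)\setminus A$, i.e.\ checking that twists and deletions commute on disjoint elements so that the induction closes, together with the elementary identity $E\Delta A^{c} = A$. Once the two identities $D/A = (D*A)\setminus A$ and $D^{*}/A^{c} = (D*A)\setminus A^{c}$ are in hand, the theorem is nothing more than two applications of Lemma \ref{leg} to the bipartite delta-matroid $D*A$.
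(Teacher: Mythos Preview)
Your proposal is correct and follows essentially the same route as the paper: both arguments reduce to the identities $D/A=(D*A)\setminus A$ and $D^{*}/A^{c}=(D*A)\setminus A^{c}$ and then invoke Lemma~\ref{leg}. You are simply more explicit about the inductive passage from the single-element relation $D/e=(D*e)\setminus e$ to its set-level form, which the paper takes for granted.
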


\begin{proof}
$D^{*}/A^{c}=((D^{*})*A^{c})\setminus A^{c}=(D*{A})\setminus A^{c}$, where the first by the relation between twist and contraction, and the second by the basic properties of twist. Similarly, $D/A=(D*A)\setminus A.$ Since $D*A$ is bipartite, then $(D*{A})\setminus A^{c}$ and $(D*A)\setminus A$ are bipartite by Lemma \ref{leg}, completing the proof.
\end{proof}

\section*{Acknowledgements}

We thank Graham Farr and Benjamin Jones for many helpful suggestions and comments. This work is supported by NSFC (No. 11671336) and the Fundamental
Research Funds for the Central Universities (No. 20720190062).

\end{document}